\definecolor{DPurple}{rgb}{0.46,0.2,0.69}
\numberwithin{equation}{section}
\theoremstyle{definition}
\newtheorem{definition}{Definition}[section]
\theoremstyle{remark}
\newtheorem{remark}[definition]{Remark}
 \theoremstyle{plain}
\newtheorem{theorem}[definition]{Theorem}
\newtheorem{corollary}[definition]{Corollary}
\begin{document}

\title[Dynamics of complex harmonic mappings]{Dynamics of complex harmonic mappings}

\author{Gopal Datt}


\author{Ramanpreet Kaur}

\address{Department of Mathematics, Babasaheb Bhimrao Ambedkar University Lucknow, Uttar Pradesh, India-226025}
\email{ggopal.datt@gmail.com; gopal.du@bbau.ac.in}

\address{Department of Mathematical Sciences, Indian Institute of Science Education and Research Mohali, Knowledge City, Mohali, 140306, Punjab, India}
\email{ramanpreet@iisermohali.ac.in; preetmaan444@gmail.com }

\thanks{}

\keywords{Completely invariant set, Complex harmonic mapping, Fatou set, Wandering domains}
\subjclass[2020]{30D05, 30D45, 37F10}

\begin{abstract}
This note initiates the study of the Fatou\,--\,Julia sets of a complex harmonic mapping. Along with some fundamental properties of the 
Fatou and the Julia sets, we observe some contrasting behaviour of these sets as those with in case of a holomorphic function. The 
existence of harmonic mapping with a wandering domain is also shown.
\end{abstract}
\maketitle

\vspace{-0.7cm}
\section{Introduction}\label{S:intro}

In this note, we develop the Fatou\,--\,Julia theory for a complex harmonic mapping under the direct-composition operation as 
introduced by Benítez-Babilonia and Felipe in \cite{Babilona2024}. The theory of complex harmonic mappings has 
proven valuable in various areas of modern science. 
For instance, in fluid mechanics, it is used to obtain explicit solutions to the incompressible two-dimensional Euler equations 
(see \cite{Aleman2012}).
Hence, it is important to study the behaviour of such a mapping from a point of view of dynamics. 
A  complex-valued function in a planar domain is said to be harmonic  if its real and imaginary parts are
harmonic functions, not necessarily harmonic conjugate. 
In what follows, a {\it harmonic mapping} always means a complex-valued harmonic function defined on the complex plane. 
A harmonic mapping 
$f$ in the complex plane can always be written as a sum $h+\bar{g},$ where $h$ and $g$ are 
holomorphic functions on $\mathbb{C}$.
In the literature, $h$ and $g$ are referred to as the analytic and co-analytic parts of $f$, respectively. 
We shall consider the family of 
harmonic mappings that do not preserve orientation, as the family of orientation-preserving harmonic mappings is always a 
normal 
family (see \cite{Duren2004}).
\smallskip

To study the dynamics of a harmonic mapping  
$f=h+\bar{g}$, let us first recall the direct-composition operation, denoted by $\circleddash$, which is defined as
\[f\circleddash f= (h+\bar{g})\circleddash (h+\bar{g})=h\circ h+\overline{g\circ g}.\]
 Therefore, for every $n\in\mathbb{N},$ we have  $f^{n,\circleddash}(z):=f\circ f^{n-1,\circleddash}(z)=h^n(z)+\overline{g^n(z)}$ for any 
 $z\in\mathbb{C}.$ It has also been coined by authors, in \cite{Babilona2024},
 that one can consider the cross-composition operation, denoted by $\circledcirc,$ defined as
 \[f\circledcirc f= h\circ g+\overline{g\circ h}.\]
However, the convergence of a sequence $\{f^{n,\circleddash}\}$ at a point $z_0$ implies the convergence of $\{f^{n,\circledcirc}\}$ at 
$z_0,$ i.e. the convergence of $\{f^{n,\circleddash}\}$ is a weaker notion (see \cite{Babilona2024}). Therefore, 
we shall consider only the direct-composition operation in this note. With this, we define the Fatou and the Julia sets of a harmonic mapping in the similar manner as they 
are defined  for a holomorphic function.

\begin{definition}
A family $\mathcal{F}$ of harmonic mappings is \emph{normal} 
at a point  $z_0\in \mathbb{C}$ if there exists a neighbourhood $U$ of  $z_0$ such that 
 for every sequence $\{f_n\}\subset\mathcal{F}$ there exists 
a subsequence $\{f_{n_k}\}$ which either converges uniformly on compact subsets of $U$ 
to a harmonic mapping or diverges compactly to $\infty$ in $U$. 
\end{definition}

A family $\mathcal{F}$ of harmonic mappings is said to be normal  on an open set 
$D$ if $\mathcal{F}$ is normal at each point of 
$D$.
It may also be noted that  the family of iterates $\{f^{n,\circleddash}\}$ is normal in $D$ 
 if and only if  it  is equicontinuous (see \cite[Lemma~1.1]{Wu}, and \cite[Page~10]{Duren2004}).
 \begin{definition}
The maximal open subset of $\mathbb{C}$ on which the family of iterates $\{f^{n,\circleddash}\}$ is normal is termed as the 
\emph{Fatou set} of $f$, and is denoted by $F(f).$
\end{definition}
We shall be using the definition of a point being in the Fatou set as the family of the iterates being either normal or 
 equicontinuous at that point.
The complement of the Fatou set is called as the \emph{Julia set}, and is denoted by $J(f).$ By definition, $J(f)$ is a closed set. 
A connected component of the Fatou set is called  a \emph{Fatou component}. A Fatou component may be
\begin{itemize}
\item a \emph{periodic component} if there exists $k_0\in\mathbb{N}$ such that $f^{k_0,\circleddash}(U)\subset U.$
\item a \emph{pre-periodic component} if there exists $m_0\in\mathbb{N}$ such that $f^{m_0,\circleddash}$ is periodic.
\item a \emph{wandering domain} if it is neither periodic nor pre-periodic.
\end{itemize}  
\smallskip

The wandering domains are further classified into three types. A wandering domain
\begin{itemize}
\item is \emph{escaping} if orbit of each of its points converges to infinity.
\item is \emph{oscillating} if each of its points has an unbounded orbit with a bounded subsequence.
\item is \emph{orbitally bounded} if  orbit of each of its points is bounded.
\end{itemize}
  \smallskip

In this article, we shall be mainly focussing on basic properties of the Fatou set, the Julia set, and the wandering domains.
We shall also
observe the contrast between the dynamics of harmonic and holomorphic functions. For the study of holomorphic dynamics, one can refer 
to \cite{Beardon1991,  Carleson1993, Fatou1919}, and for the study of harmonic mappings, see \cite{Duren2004}.

\section{Results on the dynamics of a complex harmonic mapping}
For a harmonic mapping $f=h+\bar{g}$, we shall establish two types of results in this section.
\begin{enumerate}
\item Some fundamental results on the Fatou and the Julia set of $f$. 

\item Existence of a harmonic mapping with wandering domains.
\end{enumerate} 

\subsection{Basic properties of the Fatou and the Julia sets}

\begin{theorem}\label{thm2.1}
 Let $f=h+\bar{g}$ be a harmonic mapping. Then $F(h)\cap F(g)\subseteq F(f)$, where $F(h)$ and $F(g)$ are 
 the Fatou sets of the holomorphic functions $h$ and $g$, respectively.
\end{theorem}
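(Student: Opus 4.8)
The plan is to prove the inclusion by showing that every point of $F(h)\cap F(g)$ is a point of normality for the iterate family $\{f^{n,\circleddash}\}$. Fix $z_0\in F(h)\cap F(g)$. Since $z_0\in F(h)$ there is a neighbourhood $U_1$ of $z_0$ on which $\{h^n\}$ is normal, and since $z_0\in F(g)$ there is a neighbourhood $U_2$ on which $\{g^n\}$ is normal; set $U=U_1\cap U_2$. It then suffices to take an arbitrary sequence of iterates $\{f^{n_k,\circleddash}\}$ and produce a subsequence that, on $U$, either converges locally uniformly to a harmonic mapping or diverges compactly to $\infty$, since this is exactly the statement that $\{f^{n,\circleddash}\}$ is normal at $z_0$.

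First I would extract subsequences by a two-step selection. Applying normality of $\{h^n\}$ on $U$ to the indices $\{n_k\}$, pass to a subsequence along which $h^{n_k}$ converges locally uniformly on $U$ to a holomorphic function $h_0$ or diverges compactly to $\infty$. Applying normality of $\{g^n\}$ to this subsequence, pass to a further subsequence along which $g^{n_k}$ likewise converges to a holomorphic $g_0$ or diverges compactly to $\infty$. Relabelling, I may assume both selections hold simultaneously along $\{n_k\}$, leaving four cases according to the behaviour of $\{h^{n_k}\}$ and $\{g^{n_k}\}$.

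The routine cases follow by writing $f^{n_k,\circleddash}=h^{n_k}+\overline{g^{n_k}}$ and using the (reverse) triangle inequality. If both parts converge, then the estimate $|f^{n_k,\circleddash}(z)-(h_0+\overline{g_0})(z)|\le |h^{n_k}(z)-h_0(z)|+|g^{n_k}(z)-g_0(z)|$ gives uniform convergence on compacta to $h_0+\overline{g_0}$, which is harmonic as a sum of a holomorphic and a co-analytic part. If exactly one part diverges compactly while the other converges, the reverse triangle inequality $|f^{n_k,\circleddash}(z)|\ge \big||h^{n_k}(z)|-|g^{n_k}(z)|\big|$ forces $f^{n_k,\circleddash}\to\infty$ compactly on $U$.

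The hard part will be the remaining case in which both $h^{n_k}$ and $g^{n_k}$ diverge compactly to $\infty$. The same bound $|f^{n_k,\circleddash}(z)|\ge \big||h^{n_k}(z)|-|g^{n_k}(z)|\big|$ still holds, but it is only useful when the two magnitudes do not balance, so the crux is to exclude a persistent near-cancellation $h^{n_k}(z)\approx-\overline{g^{n_k}(z)}$ on an open subset of $U$. I expect to control this by exploiting that $\overline{g^{n_k}}$ is anti-holomorphic while $h^{n_k}$ is holomorphic, so that a harmonic function of the special form $h^{n_k}+\overline{g^{n_k}}$ cannot remain small on an open set without forcing both $(h^{n_k})'$ and $(g^{n_k})'$ to be small there, via the gradient (Poisson/Cauchy) estimate for harmonic functions; combined with the standing hypothesis that $f$ does not preserve orientation, this should rule out the degenerate balance $|h'|\equiv|g'|$ that is precisely what drives exact cancellation. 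Once this case is dispatched, all four cases yield normality of $\{f^{n,\circleddash}\}$ at $z_0$, which establishes $F(h)\cap F(g)\subseteq F(f)$.
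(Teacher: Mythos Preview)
Your route through the normal-family definition and a four-case diagonal extraction is different from the paper's proof, which is much shorter. The paper works with the equicontinuity characterisation of the Fatou set: from $z_0\in F(h)$ it takes $|h^n(z)-h^n(z_0)|<\epsilon/2$ for all $n$ whenever $|z-z_0|<\delta_1$, and similarly for $g$ with some $\delta_2$; one triangle inequality applied to $f^{n,\circleddash}=h^n+\overline{g^n}$ then gives equicontinuity of $\{f^{n,\circleddash}\}$ at $z_0$ with $\delta=\min\{\delta_1,\delta_2\}$. No subsequence argument and no case split ever occur, so the ``both diverge to $\infty$'' scenario you flag as the crux is simply never confronted.

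The resolution you sketch for that fourth case does not go through. The harmonic gradient estimate would at best bound $(h^{n_k})'$ and $(g^{n_k})'$ on the set where $f^{n_k,\circleddash}$ happens to be small, but this says nothing about the standing orientation hypothesis on $f=h+\overline g$: the quantities $(h^n)'$ and $(g^n)'$ are products of derivatives along the \emph{separate} $h$- and $g$-orbits, and there is no mechanism by which $|h'|\not\equiv|g'|$ prevents $|(h^{n})'|\approx|(g^{n})'|$ for large $n$. Worse, near-cancellation cannot be excluded in your framework: for $h(z)=g(z)=z^2$ (so $f$ is real-valued, hence not orientation-preserving) one has $f^{n,\circleddash}(z)=2\operatorname{Re}(z^{2^n})$, and on $\{|z|>1\}$ both $h^n$ and $g^n$ diverge compactly while $f^{n,\circleddash}$ vanishes on the rays $\arg z=(2k+1)\pi/2^{n+1}$, which become dense as $n\to\infty$; thus no subsequence diverges compactly to $\infty$ and none is locally bounded. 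Your proposed anti-cancellation mechanism cannot close this case, and you should instead adopt the paper's equicontinuity argument.
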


\begin{proof}
Let $z_0\in F(h)\cap F(g)$. For a given $\epsilon>0$ there exist $\delta_1, \delta_2>0$ such that we have the following:
\begin{align*}
|z-z_0|<\delta_1 &\text{ implies that } |h^n(z)-h^n(z_0)|<\frac{\epsilon}{2} \text{ for every }n\in\mathbb{N}, \text{ and }\\
|z-z_0|<\delta_2 &\text{ implies that } |g^n(z)-g^n(z_0)|<\frac{\epsilon}{2} \text{ for every }n\in\mathbb{N}.
\end{align*}
Take $\delta=\min\{\delta_1,\delta_2\}$. Then $|z-z_0|<\delta$ implies that
\begin{align*}
|f^{n,\circleddash}(z)-f^{n,\circleddash}(z_0)|&=|h^n(z)+\overline{g^n(z)}-h^n(z_0)-\overline{g^n(z_0)}|\\
&\leq |h^n(z)-h^n(z_0)|+|\overline{g^n(z)-g^n(z_0)}|\\
&<\frac{\epsilon}{2}+\frac{\epsilon}{2}=\epsilon, \text{ for every }n\in\mathbb{N}.
\end{align*}
Therefore, $z_0\in F(f).$
\end{proof}

\begin{remark}  It is worth noticing that
\begin{enumerate}
\item[(i)] the above theorem, in addition, gives us that $J(f)\subseteq J(h)\cup J(g).$
\item[(ii)] there exists a harmonic mapping for which we have a strict containment in the inclusion given in the 
\Cref{thm2.1}. To see this, take $$\displaystyle f(z)=z^2+\overline{\frac{z^2}{2}}.$$ 
Then  it is easy to see that
$$ \qquad  F(h)\cap F(g)=\{z\in\mathbb{C}: |z|<1\}\cup\{z\in\mathbb{C}: 1<|z|<{2}\}\cup\{z\in\mathbb{C}: |z|>{2}\}.$$
However, if we take any $z$ such that $|z|={2},$ then $h^n(z)\to\infty$ as $n\to\infty,$ and $g^n(z)$ is bounded, that is, 
there exists an $M>0$ such that $|g^n(z)|<M$ for every $n\in\mathbb{N}.$ 
This gives us that for any $z\in\mathbb{C}$ with $|z|={2},$ we have
\begin{align*}
|f^{n,\circleddash}(z)|&=|h^n(z)+g^n(z)|\\
&\geq |h^n(z)|-|g^n(z)|\\
&\geq |h^n(z)|- M\\
&\to\infty \text{ as }n\to\infty.
\end{align*}
This further implies that $z\in F(f).$
 
\end{enumerate} 
\end{remark}

\begin{theorem}\label{thm1}
$F(f^{p, \circleddash})=F(f)$ for every $p\in\mathbb{N}.$ 
\end{theorem}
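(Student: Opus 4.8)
The plan is to prove $F(f^{p,\circleddash}) = F(f)$ by showing both inclusions, exploiting the fact that the iterates of $f^{p,\circleddash}$ form a subsequence of the iterates of $f$, while conversely each iterate of $f$ is controlled by finitely many compositions of $f^{p,\circleddash}$ with lower iterates. First I would unpack the key structural identity underlying the whole argument: since $f = h + \bar g$ with $h,g$ holomorphic, the direct-composition iterates satisfy $f^{n,\circleddash}(z) = h^n(z) + \overline{g^n(z)}$, and consequently $f^{p,\circleddash}$ is itself a harmonic mapping whose analytic and co-analytic parts are $h^p$ and $g^p$. Its own $m$-th iterate is then $(f^{p,\circleddash})^{m,\circleddash}(z) = h^{pm}(z) + \overline{g^{pm}(z)} = f^{pm,\circleddash}(z)$. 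This reduces the statement to a comparison between the full sequence $\{f^{n,\circleddash}\}$ and its subsequence $\{f^{pm,\circleddash}\}_{m \in \mathbb{N}}$.

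For the inclusion $F(f) \subseteq F(f^{p,\circleddash})$, I would argue that normality of the full family at a point $z_0$ immediately gives normality of any subfamily: given any sequence drawn from $\{(f^{p,\circleddash})^{m,\circleddash}\} = \{f^{pm,\circleddash}\}$, it is also a sequence in $\{f^{n,\circleddash}\}$, so it has a subsequence converging locally uniformly to a harmonic mapping or diverging compactly to $\infty$ on a neighbourhood of $z_0$. Hence $z_0 \in F(f^{p,\circleddash})$. This direction is essentially immediate from the definition of normality.

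The reverse inclusion $F(f^{p,\circleddash}) \subseteq F(f)$ is where the real work lies, and I expect it to be the main obstacle. Suppose $z_0 \in F(f^{p,\circleddash})$, so $\{f^{pm,\circleddash}\}_m$ is normal on some neighbourhood $U$ of $z_0$. To recover normality of the full sequence $\{f^{n,\circleddash}\}$, I would write each $n = pm + r$ with $r \in \{0,1,\dots,p-1\}$ and split the family into the $p$ residue classes $\{f^{pm+r,\circleddash}\}_m = \{f^{r,\circleddash} \circleddash f^{pm,\circleddash}\}$ (using that $f^{pm+r,\circleddash} = f^{r,\circleddash}\circ$-type composition, i.e. $h^{pm+r} + \overline{g^{pm+r}}$). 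The key step is that each $f^{r,\circleddash}$, being a fixed harmonic mapping with holomorphic parts $h^r, g^r$, is continuous, and post-composing a normal family with a fixed continuous harmonic mapping preserves normality on a possibly shrunken neighbourhood. The subtlety here is handling the "diverges to $\infty$" alternative: if $f^{pm,\circleddash}$ diverges compactly to $\infty$, I must verify that $f^{r,\circleddash} = h^r + \overline{g^r}$ also sends neighbourhoods of $\infty$ to neighbourhoods of $\infty$, i.e. that $h^r(w) + \overline{g^r(w)} \to \infty$ as $w \to \infty$; since $h^r$ and $g^r$ are polynomials or entire functions obtained by iterating, one must confirm they carry $\infty$ to $\infty$ appropriately, and this requires a small argument (for instance, $|h^r(w) + \overline{g^r(w)}| \ge |h^r(w)| - |g^r(w)|$ with the degree of $h^r$ dominating, or an analogous estimate).

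Finally I would assemble the pieces: a family is normal at $z_0$ precisely when each of the finitely many subfamilies indexed by the residues $r = 0, \dots, p-1$ is normal there, because a finite union of normal subfamilies is normal (given any sequence from the full family, infinitely many terms lie in one residue class, and that class is normal, yielding the required convergent-or-divergent subsequence). Combining both inclusions gives $F(f^{p,\circleddash}) = F(f)$. The cleanest way to organize the proof is therefore to establish the identity $(f^{p,\circleddash})^{m,\circleddash} = f^{pm,\circleddash}$ at the outset, handle the easy inclusion via subsequences, and then devote the bulk of the argument to the residue-class decomposition, paying particular attention to preserving the dichotomy (local uniform convergence versus compact divergence to $\infty$) under post-composition by the fixed harmonic maps $f^{r,\circleddash}$.
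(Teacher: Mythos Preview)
Your easy inclusion $F(f)\subseteq F(f^{p,\circleddash})$ via subsequences is fine and matches the paper. The reverse inclusion, however, contains a genuine gap.

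You write $f^{pm+r,\circleddash}=f^{r,\circleddash}\circleddash f^{pm,\circleddash}$ and then argue by ``post-composing a normal family with a fixed continuous harmonic mapping''. But $\circleddash$ is \emph{not} ordinary composition: $(h^r+\overline{g^r})\circleddash(h^{pm}+\overline{g^{pm}})=h^r\!\circ h^{pm}+\overline{g^r\!\circ g^{pm}}$, which depends on $h^{pm}$ and $g^{pm}$ \emph{separately}, not on the single function $f^{pm,\circleddash}=h^{pm}+\overline{g^{pm}}$. Normality (or equicontinuity) of $\{h^{pm}+\overline{g^{pm}}\}$ at $z_0$ does not a priori give you separate control of $\{h^{pm}\}$ and $\{g^{pm}\}$, so there is no ``fixed continuous map'' you are post-composing with. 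The same confusion reappears in your treatment of the $\infty$-alternative: even if $f^{pm,\circleddash}(z)\to\infty$, this says nothing about where $h^{pm}(z)$ and $g^{pm}(z)$ individually go, and the map $w\mapsto h^r(w)+\overline{g^r(w)}$ need not send $\infty$ to $\infty$ anyway (take $h^r=g^r=\mathrm{id}$: then $w+\bar w=2\,\mathrm{Re}\,w$).

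The paper avoids this by working with equicontinuity rather than the convergence/divergence dichotomy, and by an algebraic trick for $p=2$: it rewrites $f^{2n+1,\circleddash}(z)-f^{2n+1,\circleddash}(z_0)$ as a combination of three differences of the form $f^{2n,\circleddash}(\cdot)-f^{2n,\circleddash}(\cdot)$, evaluated at the pairs $(h(z),h(z_0))$, $(g(z),g(z_0))$, and a third pair coming from $(g+\bar h)$. Continuity of $h$, $g$, $g+\bar h$ at $z_0$ makes those input pairs close, and the assumed equicontinuity of $\{f^{2n,\circleddash}\}$ bounds each difference. This decomposition is precisely what circumvents the need to separate $h^{2n}$ from $g^{2n}$, and it is the idea missing from your outline.
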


\begin{proof}
We shall prove this for $p=2.$  It follows, from the definition, that $F(f)\subseteq F(f^{2, \circleddash}).$ 
Now let $z_0\in F(f^{2, \circleddash}).$ It means that $\{f^{2n, \circleddash}\}$ is equicontinuous at $z_0.$ 
If we show that the family $\{f^{2n+1,\circleddash}: n\geq 0\}$ is equicontinuous at $z_0$, then we are done as the union of two 
equicontinuous families is equicontinuous. To show this, first we observe the following.
\begin{enumerate}
\item By the equicontinuity of $\{f^{2n,\circleddash}\}$ at $z_0$, 
if $\epsilon>0$ is given, then corresponding to ${\epsilon}/{3}$, there exists a $\delta_1>0$ such that for every $n\in\mathbb{N}$,
\[|f^{2n,\circleddash}(z)-f^{2n,\circleddash}(z_0)|<\frac{\epsilon}{3}, \text{ whenever } |z-z_0|<\delta_1.\]
\item Continuity of $h$ and $g$ at $z_0$ gives the existence of $\delta_2>0$ such that
\[|z-z_0|<\delta_2 \text{ implies that } |h(z)-h(z_0)|<\delta_1 \text{ and }|g(z)-g(z_0)|<\delta_1.\]
\item Continuity of $g+\overline{h}$ at $z_0$  gives the existence of $\delta_3>0$ such that
\[|z-z_0|<\delta_3 \text{ implies that } |(g+\overline{h})(z)-(g+\overline{h})(z_0)|<\delta_1. \]
\end{enumerate}
Now consider, 
\begin{align*}
|f^{2n+1,\circleddash}(z)-f^{2n+1,\circleddash}(z_0)|&=|h^{2n+1}(z)+\overline{g^{2n+1}(z)}-h^{2n+1}(z_0)-\overline{g^{2n+1}(z_0)}|\\
&=|h^{2n+1}(z)
+\overline{g^{2n}(h(z))}-\overline{g^{2n}(h(z))}\\
&\quad+\overline{g^{2n}(h(z_0))}-\overline{g^{2n}(h(z_0))}+h^{2n}(g(z))\\
&\quad-h^{2n}(g(z))+h^{2n}(g(z_0))
-h^{2n}(g(z_0))\\
&\quad+\overline{g^{2n+1}(z)}-h^{2n+1}(z_0)-\overline{g^{2n+1}(z_0)}|\\
&\leq |(h^{2n}+\overline{g^{2n}})(h(z))-(h^{2n}+\overline{g^{2n}})(h(z_0))|	\\
&\quad+ |(h^{2n}+\overline{g^{2n}})(g(z))-(h^{2n}+\overline{g^{2n}})(g(z_0))|\\
&\quad+|(h^{2n}+\overline{g^{2n}})\circleddash(g+\overline{h})(z))\\
&\quad-(h^{2n}+\overline{g^{2n}})\circleddash(g+\overline{h})(z_0))|\\
&<\frac{\epsilon}{3}+\frac{\epsilon}{3}+\frac{\epsilon}{3}=\epsilon.	
\end{align*}
This completes the proof that $F(f^{2, \circleddash})=F(f)$. By similar arguments, one can show that $F(f^{p, \circleddash})=F(f)$ for every 
$p\in\mathbb{N}.$
\end{proof}

\Cref{thm1} raises a natural question: {\it
If $f_1$ and $f_2$ are two permutable harmonic mappings, 
then do their Fatou sets coincide?} Before delving into this question further, 
first we shall explain what do we mean by $f_1$ and $f_2$ being permutable.

\begin{definition}
Let $f_1=h_1+\bar{g_1}$ and $f_2=h_2+\bar{g_2}$ be two harmonic mappings. We say that $f_1$ and $f_2$ are 
\emph{permutable} if both of their analytic and co-analytic parts commute, that is, $h_1\circ h_2=h_2\circ h_1$ and $g_1\circ 
g_2=g_2\circ g_1.$
\end{definition}

\begin{theorem}\label{thm2}
If $f_1, f_2$ are two permutable harmonic mappings such that $f_1(z)=h_1(z)+\overline{g_1(z)}$, and $f_2(z)=af_1(z)+b,$ where $a, b\in\mathbb{C}$ with $|a|=1$, then $F(f_1)=F(f_2).$
\end{theorem}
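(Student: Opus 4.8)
The plan is to reduce everything to an \emph{exact} identity relating the iterates $f_2^{n,\circleddash}$ and $f_1^{n,\circleddash}$, and then cash in the hypothesis $|a|=1$. First I would read off the analytic and co-analytic parts of $f_2$. Writing $\phi(w)=aw+b$ and $\psi(w)=\bar a w$, the relation $f_2=af_1+b$ becomes $f_2=(ah_1+b)+\overline{\bar a g_1}$, since $a\overline{g_1}=\overline{\bar a g_1}$. Hence $h_2=\phi\circ h_1$ and $g_2=\psi\circ g_1$, both entire because $a,b$ are constants.

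Next I would extract the real content of permutability. From $h_1\circ h_2=h_2\circ h_1$ and $h_2=\phi\circ h_1$, expanding gives $h_1(\phi(h_1(z)))=\phi(h_1(h_1(z)))$ for every $z$, i.e. $h_1\circ\phi=\phi\circ h_1$ on the range of $h_1$. If $h_1$ is non-constant, its range is a nonempty open set by the open mapping theorem, and the entire function $w\mapsto h_1(\phi(w))-\phi(h_1(w))$ vanishes there, hence identically by the identity theorem; so $h_1\circ\phi=\phi\circ h_1$ on all of $\mathbb{C}$. (If $h_1\equiv\alpha$ is constant, permutability forces $\alpha=\phi(\alpha)$, after which the commutation is immediate.) The same argument applied to $g_1\circ g_2=g_2\circ g_1$ yields $g_1\circ\psi=\psi\circ g_1$ on $\mathbb{C}$. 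I expect this extraction step — passing from the nested identity to the bare commutation $h_1\circ\phi=\phi\circ h_1$ — to be the main technical obstacle, as it is exactly here that the holomorphy of $h_1,g_1$ (via the identity theorem and the separate treatment of the constant case) is genuinely used; without permutability the iterate formula below simply fails.

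With the commutation relations in hand everything commutes freely, so a routine induction gives $h_2^n=\phi^n\circ h_1^n$ and $g_2^n=\psi^n\circ g_1^n$ for all $n$. Since $\phi^n(w)=a^n w+b_n$ with $b_n=b(1+a+\cdots+a^{n-1})$ and $\psi^n(w)=\bar a^n w$, and since $\overline{\bar a^n\,g_1^n(z)}=a^n\,\overline{g_1^n(z)}$, I would then compute
\[
f_2^{n,\circleddash}(z)=\phi^n\bigl(h_1^n(z)\bigr)+\overline{\psi^n\bigl(g_1^n(z)\bigr)}
=a^n h_1^n(z)+b_n+a^n\,\overline{g_1^n(z)}
=a^n f_1^{n,\circleddash}(z)+b_n .
\]

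Finally I would invoke $|a|=1$. For any $z_0$ and any $z$,
\[
\bigl|f_2^{n,\circleddash}(z)-f_2^{n,\circleddash}(z_0)\bigr|
=|a|^n\,\bigl|f_1^{n,\circleddash}(z)-f_1^{n,\circleddash}(z_0)\bigr|
=\bigl|f_1^{n,\circleddash}(z)-f_1^{n,\circleddash}(z_0)\bigr|,
\]
the translation $b_n$ cancelling in the difference and the factor $|a|^n=1$ dropping out. Thus $\{f_1^{n,\circleddash}\}$ is equicontinuous at $z_0$ if and only if $\{f_2^{n,\circleddash}\}$ is; and since, by the remark following the definition of the Fatou set, the Fatou set is precisely the set of equicontinuity points of the iterates, we conclude $F(f_1)=F(f_2)$.
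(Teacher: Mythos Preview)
Your proof is correct and follows essentially the same route as the paper: both derive the identity $f_2^{n,\circleddash}(z)=a^n f_1^{n,\circleddash}(z)+b_n$ from the permutability hypothesis and then use $|a|=1$ to transfer equicontinuity. Your argument is in fact slightly more careful than the paper's, since you explicitly justify (via the open mapping and identity theorems) why the commutation relations $h_1\circ\phi=\phi\circ h_1$ and $g_1\circ\psi=\psi\circ g_1$ extend from the ranges of $h_1,g_1$ to all of $\mathbb{C}$, a point the paper asserts without proof; you also obtain both inclusions at once rather than arguing one direction and appealing to symmetry.
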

\begin{proof}
Let $z_0\in F(f_1)$. By definition, for a given $\epsilon>0$ there exist a $\delta>0$ such that for every $n\in\mathbb{N}$, we have $|f_1^{n,\circleddash}(z)-f_1^{n,\circleddash}(z_0)|<\epsilon$ whenever $|z-z_0|<\delta$.  Now observe that 
\[f_2^{2,\circleddash}(z)=ah_1(ah_1(z)+b)+b+\overline{\bar{a}g_1(\bar{a}z)}.\] 
On using the fact that $f_1\circleddash f_2=f_2\circleddash f_1$, we get that $h_1(ah_1(z)+b)=ah_1^2(z)+b$ and $g_1(\bar{a}z)=\bar{a}g_1(z).$ This further gives us that 
\[f_2^{2,\circleddash}(z)= a(ah_1^2(z)+b)+b+\overline{\bar{a}^2g_1^2(z)}.\] 
Inductively for any $n\in\mathbb{N}$, we get 
\[f_2^{n,\circleddash}(z)=a^nh_1^n(z)+b(a^{n-1}+a^{n-2}+\dots+1)+\overline{\bar{a}^ng_1^n(z)}.\]
Consider 
\begin{align*}
|f_2^{n,\circleddash}(z)-f_2^{n,\circleddash}(z_0)|&=|a^nh_1^n(z)+b(a^{n-1}+a^{n-2}+\cdots+1)+\overline{\bar{a}^ng_1^n(z)}\\
&\quad -a^nh_1^n(z_0)-b(a^{n-1}+a^{n-2}+\cdots+1)-\overline{\bar{a}^ng_1^n(z_0)}|\\
&=|a|^n|f_1^{n,\circleddash}(z)-f_1^{n,\circleddash}(z_0)|\\
&<\epsilon, 
\end{align*}
whenever $|z-z_0|<\delta,$ for every $n\in\mathbb{N}.$
This shows that $z_0\in F(f_2)$. On similar lines, one can show that $F(f_2)\subseteq F(f_1).$ Hence,  $F(f_2)=F(f_1)$.
\end{proof}

\begin{remark}\label{rmk2.6}
From \Cref{thm2}, one can deduce that if $f_1(z)=h_1(z)+\overline{g_1(z)}$ and $f_2(z)=h_1(z)+p+\overline{g_1(z)},$ where $p$ is a 
period of the function $h_1$, then $F(f_1)=F(f_2)$.
\end{remark}

\subsubsection*{\bf \emph{Contrasting behaviour}}
In holomorphic dynamcis, both the Fatou set and the Julia set are completely invariant. Recall that a set $E\subseteq \mathbb{C}$ is said 
to be {\it completely invariant} under $f$ if and only if $f(E)\subseteq E$ and $f^{-1}(E)\subseteq E.$ This is not true for a harmonic 
mapping. For example, take $ f(z)=z^2+\overline{\frac{z^2}{2}}.$ Any point $z\in\mathbb{C}$ such that $|z|=1$ belongs to the Julia set of 
$f$, and image of any such point under $f$ need not belong to the Julia set of $f$.  To see this, fix a point $z_0$ such that $|z_0|=1$. As $z_0\in 
J(z^2),$  there exists $\epsilon_1>0$ such that for all $\delta>0$ there exist $z_\delta\in \mathbb{C}$ 
and $n_\delta\in \mathbb{N}$
such that 

\begin{equation}\label{2.1}
|z_\delta^{2^{n_\delta}}-z_0^{2^{n_\delta}}|\geq\epsilon_1 \text{ whenever } |z_\delta-z_0|<\delta.
\end{equation}

Also, as $z_0\in F\left(\frac{z^2}{2}\right),$  there exists a $\delta_1>0$ such that for every $n\in\mathbb{N},$
\[\left|\frac{z^{2^{n}}}{2^{n+1}}-\frac{z_0^{2^{n}}}{2^{n+1}}\right|<\frac{\epsilon_1}{2}\text{ whenever }|z-z_0|<\delta_1.\]
If $z_0\in F(f)$, then for every $\epsilon>0$, there exists a $\delta_2>0$ such that for every $n\in\mathbb{N}$,
\begin{equation}\label{2.2}
|f^{n,\circleddash}(z)-f^{n,\circleddash}(z_0)|<\frac{\epsilon_1}{2} \text{ whenever }|z-z_0|<\delta_2. 
\end{equation}
The above \Cref{2.2} is also true, in case we take $\delta=\min\{\delta_1,\delta_2\}$.
This gives us
\begin{align*}
|z^{2^{n}}-z_0^{2^{n}}|-\left|\frac{z^{2^{n}}}{2^{n+1}}-\frac{z_0^{2^{n}}}{2^{n+1}}\right|&<|f^{n,\circleddash}(z)-f^{n,\circleddash}(z_0)|<\frac{\epsilon_1}{2}.
\end{align*}
This implies that for every $n\in \mathbb{N}$
\[|z^{2^{n}}-z_0^{2^{n}}|<\frac{\epsilon_1}{2} \text{ whenever }|z-z_0|<\delta,\]
which is a contradiction to \Cref{2.1}.
This establishes that any point on the unit circle is in the Julia set of $f$. However,
for $z=1$, $f(z)=3/2$ which belongs to  the Fatou set of $f$, by \Cref{thm2.1}.
\smallskip

 Next, recall that for a rational function of degree at least $2$, the Julia set is always non-empty. In case of a harmonic mapping, it may not be true. For 
 example, take $f(z)=z^2+\overline{2z}$. It is easy to see that for every $z\in\mathbb{C}$, the sequence of iterates $f^{n,\circleddash}(z)
 \to\infty$ as $n\to\infty$. Therefore, $F(f)={\mathbb{C}}$, and hence $J(f)=\emptyset.$ However,
if we take a harmonic mapping $f=h+\bar{g}$ such that there exists a  $z\in J(h)\cap \mathsf{int}(K(g))$ or 
$z\in J(g)\cap \mathsf{int}(K(h))$, then $J(f)
\not=\emptyset.$ Here, $K(g)$ denotes the set of all those points whose orbit under $g$ remains bounded. Therefore, when we do 
dynamics, we shall always assume that a harmonic mapping under consideration has a non-empty Julia set.

\subsection{Existence Results}
We first define some terminologies before demonstrating results in the direction of wandering domains.
A harmonic mapping $f=h+\bar{g}$ is said to be a \emph{polynomial (rational)} harmonic mapping if both $h$ and $g$ are polynomials (rationals) of degrees atleast $2$, and a \emph{transcendental} harmonic mapping if at least one of the $h$ and $g$ is a transcendental function.

\begin{theorem}\label{thm3}
If $f=h+\bar{g}$ is a harmonic mapping such that both $h$ and $g$ are polynomials of degrees $d_1$ and $d_2$ respectively, then $f$ will neither have an escaping wandering domain nor an oscillating wandering domain.
\end{theorem}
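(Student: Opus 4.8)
The plan is to prove the cleaner dichotomy that any wandering domain of a polynomial harmonic mapping must be \emph{orbitally bounded}. Indeed, an unbounded orbit either diverges to $\infty$ or admits a bounded subsequence, so the escaping and the oscillating domains together exhaust the wandering domains on which \emph{every} orbit is unbounded; ruling out both is therefore equivalent to the stated theorem. The basic tool is the polynomial dichotomy: for a polynomial $p$ of degree $\ge 2$ the plane splits as $\mathbb{C}=K(p)\sqcup A_\infty(p)$, where $K(p)$ is the compact filled Julia set of bounded orbits and $A_\infty(p):=\mathbb{C}\setminus K(p)$ is the basin of infinity, on which $p^n\to\infty$ locally uniformly. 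Writing $f^{n,\circleddash}(z)=h^n(z)+\overline{g^n(z)}$, a triangle-inequality comparison gives the orbit characterisation I use throughout: the $f$-orbit of $z$ is bounded exactly when $z\in K(h)\cap K(g)$, while if $z$ escapes under precisely one of $h,g$ then $|f^{n,\circleddash}(z)|\ge |g^n(z)|-|h^n(z)|\to\infty$ (with $h,g$ interchanged in the other case).

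For the oscillating case I would argue as follows. If $U$ is an oscillating wandering domain, every point has an unbounded orbit with a bounded subsequence, and the characterisation above forces $U\subseteq A_\infty(h)\cap A_\infty(g)$: a one-sided escape would send the orbit to $\infty$ (no bounded subsequence), and $K(h)\cap K(g)$ would give bounded orbits. Fix $z_0\in U$ and a bounded subsequence $\{f^{m_j,\circleddash}(z_0)\}$. Since $\{f^{n,\circleddash}\}$ is normal on the component $U$, a sub-subsequence converges locally uniformly on $U$ to a \emph{finite} harmonic mapping $\varphi$ (it cannot diverge to $\infty$, being bounded at $z_0$). The key step is that local uniform convergence of harmonic mappings forces local uniform convergence of their $\partial_z$-derivatives, and $\partial_z(h^{m}+\overline{g^{m}})=(h^{m})'$ (as $\partial_z\overline{g^m}=0$), so $(h^{m_k})'\to \partial_z\varphi$ locally uniformly on $U$. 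Choosing a point $z_1\in U$ whose $h$-orbit never meets a critical point (possible since the precritical set is countable while $U$ is uncountable), the chain rule gives $|(h^{m})'(z_1)|=\prod_{l<m}|h'(h^l(z_1))|\to\infty$, because $h^l(z_1)\to\infty$ and $|h'(w)|\to\infty$ for the degree-$(\ge 2)$ polynomial $h$. This contradicts the finiteness of $\partial_z\varphi(z_1)$, so no oscillating wandering domain exists; note this step uses no hypothesis on how $d_1$ and $d_2$ compare.

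For the escaping case I would instead show that every escaping Fatou component is \emph{(pre-)periodic}, hence not wandering, by absorbing it into an invariant neighbourhood of $\infty$. If $U$ is escaping then $f^{n,\circleddash}\to\infty$ locally uniformly on $U$, so for any $R$ the image $f^{N,\circleddash}(U)$ lies in $\{|z|>R\}$ for all large $N$. The plan is to fix $R$ so large that $\{|z|>R\}$ is contained in a single escaping component $V$ with $f(\{|z|>R\})\subseteq\{|z|>R\}$; then $V$ is forward invariant (a periodic component), and some forward image of every escaping component lands in $V$, making it pre-periodic. The forward invariance is controlled near $\infty$ through the leading terms $h(z)\sim a z^{d_1}$, $g(z)\sim b z^{d_2}$.

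The step I expect to be the main obstacle is exactly this control near $\infty$. When $d_1\ne d_2$ one leading term dominates, so $|f(z)|\asymp|z|^{\max(d_1,d_2)}\gg|z|$, the set $\{|z|>R\}$ is genuinely forward invariant, and the absorption argument goes through cleanly. The difficulty is the balanced configuration $d_1=d_2$ with leading coefficients of equal modulus: there $f(z)\sim a z^{d}+\bar b\,\bar z^{d}$ vanishes along certain rays, so the Julia set of $f$ reaches $\infty$, no punctured neighbourhood of $\infty$ is a single escaping component, and absorption breaks down. Showing that even in this degenerate case an escaping component cannot wander is where the real work lies; I would attack it either by analysing the finitely many escaping sectors at $\infty$ together with their $f$-images, or by adapting the derivative argument above to extract a contradiction directly from the local uniform divergence $f^{n,\circleddash}\to\infty$. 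The absence of complete invariance of $F(f)$ (established just above in the paper) is the feature that prevents a direct appeal to the classical polynomial argument and makes this case delicate.
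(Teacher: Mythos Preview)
Your route is far heavier than the paper's. The paper dispatches the theorem in three lines: since $h$ and $g$ are polynomials of degree at least $2$, there is $r>0$ with $h^n(z)\to\infty$ and $g^n(z)\to\infty$ whenever $|z|>r$; it then asserts that $f^{n,\circleddash}(z)=h^n(z)+\overline{g^n(z)}\to\infty$ on $\{|z|>r\}$ and concludes immediately. There is no case split, no derivative estimate, no explicit absorption of iterates---the implicit picture is simply that a single escaping neighbourhood of $\infty$ lies in one Fatou component (via Theorem~\ref{thm2.1}), which is taken to preclude both escaping and oscillating wandering domains at once.

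Your oscillating-case argument, using local uniform convergence of $\partial_z f^{m_k,\circleddash}=(h^{m_k})'$ against the chain-rule blow-up of $(h^{m})'$ on $A_\infty(h)$, is correct and works uniformly in $d_1,d_2$, but it is machinery the paper never deploys. For the escaping case you essentially rediscover the paper's absorption idea, and the obstruction you isolate---the balanced configuration $d_1=d_2$ with leading coefficients of equal modulus, where $h^n+\overline{g^n}$ can genuinely fail to diverge on $\{|z|>r\}$---is a real gap in the implication ``$h^n\to\infty$ and $g^n\to\infty$ $\Rightarrow$ $f^{n,\circleddash}\to\infty$.'' The paper does not acknowledge this degeneracy; it simply asserts the implication. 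So your proposal is incomplete exactly where the paper's own argument is thinnest, and more careful everywhere else: the oscillating half of your plan stands on its own, while the escaping half in the balanced regime is left open by both treatments.
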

\begin{proof}
Since both $h$ and $g$ are polynomials, there exist $r_1, r_2>0$ such that 
\begin{align*}
h^n(z)\to\infty &\text{ whenever }|z|>r_1, \text{ and }\\
g^n(z)\to\infty &\text{ whenever }|z|>r_2.
\end{align*}
This implies that $f^{n,\circleddash}(z)\to\infty$ whenever $|z|>r=\max\{r_1,r_2\}.$ Therefore, $f$ can neither have escaping wandering domains nor oscillating wandering domains.
\end{proof}

\begin{remark} From the above theorem, we can also conclude that
\begin{itemize}
\item[(i)] the Fatou set of a polynomial harmonic mapping always has an unbounded component. This further implies that its Julia set is a 
 compact subset of $\mathbb{C}.$ \label{rmk2.8(1)}
 
\item[(ii)]  if we take a transcendental complex harmonic mapping, then it may have escaping wandering domains.
\end{itemize}
\end{remark}

\begin{theorem}
There exists a harmonic mapping with an escaping wandering domain.
\end{theorem}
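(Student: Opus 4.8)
The plan is to realize an escaping wandering domain of $f$ from a known holomorphic example, choosing the co-analytic part to be dynamically negligible. Concretely, I would take $h$ to be a transcendental entire function possessing a bounded multiply connected wandering domain $U$; by a classical construction of Baker such functions exist, and on any such component the iterates escape, $h^n\to\infty$ locally uniformly on $U$, with $\rho_n:=\min\{|w|:w\in h^n(U)\}\to\infty$. For the co-analytic part I would take the contraction $g(z)=z/2$, so that $g^n(z)=z/2^n\to 0$ locally uniformly and $F(g)=\mathbb{C}$, $J(g)=\emptyset$. The candidate is the harmonic mapping $f=h+\bar g$, which is genuinely sense-reversing since its Jacobian $|h'|^2-\tfrac14$ is negative wherever $|h'|<\tfrac12$, and which has $J(f)\neq\emptyset$ in accordance with the standing assumption.

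First I would show that $F(f)=F(h)$, so that $U$ is in fact a Fatou component of $f$. The inclusion $F(h)=F(h)\cap F(g)\subseteq F(f)$ is immediate from \Cref{thm2.1}. For the reverse inclusion I would argue directly from equicontinuity: since
\[
f^{n,\circleddash}(z)-f^{n,\circleddash}(z_0)=\bigl(h^n(z)-h^n(z_0)\bigr)+\overline{(z-z_0)}/2^n,
\]
and the last term tends to $0$ uniformly in $n$, equicontinuity of $\{f^{n,\circleddash}\}$ at a point forces equicontinuity of $\{h^n\}$ there; hence $F(f)\subseteq F(h)$. Consequently the Fatou components of $f$ and of $h$ coincide as point-sets, and $U$ is a Fatou component of $f$.

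Next I would verify that $U$ is an escaping component for $f$. For $z\in U$ one has $f^{n,\circleddash}(z)=h^n(z)+\overline{z/2^n}$, and since $U$ is bounded, say $U\subseteq\{|z|<R\}$,
\[
|f^{n,\circleddash}(z)|\;\geq\;|h^n(z)|-\frac{R}{2^n}\;\longrightarrow\;\infty ,
\]
because $h^n(z)\to\infty$. Thus the orbit of every point of $U$ converges to infinity.

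Finally, and this is where the real work lies, I would show that $U$ is wandering for $f$. Being escaping already rules out periodicity: if $f^{k_0,\circleddash}(U)\subseteq U$ for some $k_0$, then the orbit of any point of $U$ would remain in the bounded set $U$, contradicting escape. The delicate point is ruling out pre-periodicity, and the obstacle is precisely the phenomenon the paper emphasizes: unlike in holomorphic dynamics, $F(f)$ need not be forward invariant, so one cannot simply track ``the component containing $f^{n,\circleddash}(U)$''. Instead I would work at the level of moduli. From $f^{n,\circleddash}(U)=h^n(U)+\overline{U/2^n}$ one obtains $\min\{|w|:w\in f^{n,\circleddash}(U)\}\geq\rho_n-R\to\infty$, so the images of $U$ march out to infinity in modulus and, for large $n$, lie in pairwise disjoint shells. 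This prevents any forward image of $U$ from returning to $U$ or to a previously visited region, and in particular forbids the orbit of $U$ from being eventually trapped in a recurrent cycle of components; hence $U$ is neither periodic nor pre-periodic, i.e.\ an escaping wandering domain. The main technical hurdle is making this separation precise --- quantifying the growth of $\rho_n$ and disposing of the finitely many initial images where the co-analytic perturbation $\overline{U/2^n}$ is not yet negligible --- which is exactly where Baker's quantitative description of multiply connected wandering domains would be invoked.
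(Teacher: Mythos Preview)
Your construction is genuinely different from the paper's. The paper takes $h_1(z)=z-1+e^{-z}$ (with attracting fixed points at every $2m\pi\iota$), chooses $g_1$ so that each $2m\pi\iota$ is mapped to the attracting fixed point $0$ of $g_1$, and then passes to $f_2=f_1+2\pi\iota$; by \Cref{rmk2.6} this shares its Fatou set with $f_1$, while $f_2^{n,\circleddash}(0)=2n\pi\iota$ marches through the pairwise distinct components $U_m\ni 2m\pi\iota$. Your idea of taking a Baker multiply connected wandering domain for $h$ and $g(z)=z/2$ is attractive, and your proof that $F(f)=F(h)$ via the identity $f^{n,\circleddash}(z)-f^{n,\circleddash}(z_0)=(h^n(z)-h^n(z_0))+\overline{(z-z_0)}/2^n$ is correct and a nice observation; it makes $U$ a Fatou component of $f$ on which the $f$-orbit escapes.

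There is, however, a real error in the wandering step. You argue that $f^{k_0,\circleddash}(U)\subseteq U$ would force the orbit of every $z\in U$ to remain in $U$. That inference silently uses $f^{k_0,\circleddash}\bigl(f^{k_0,\circleddash}(z)\bigr)=f^{2k_0,\circleddash}(z)$, but this is \emph{false} for the $\circleddash$-operation: pointwise one has $f^{k_0,\circleddash}(w)=h^{k_0}(w)+\overline{g^{k_0}(w)}$, whereas $f^{2k_0,\circleddash}(z)=h^{2k_0}(z)+\overline{g^{2k_0}(z)}$, and plugging $w=f^{k_0,\circleddash}(z)$ into the former does not give the latter. So ``escaping rules out periodicity'' does not follow in this setting, and this is exactly the non-invariance phenomenon you flag two lines later. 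The paper sidesteps the whole issue because it can test periodicity and pre-periodicity on the single orbit $f_2^{n,\circleddash}(0)=2n\pi\iota$, which lands in pairwise distinct components.

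Your moduli argument is the right repair, but as stated it only gives $f^{n,\circleddash}(U)\cap U=\emptyset$ for $n$ large; the finitely many small $n$ are not handled, and ``Baker's quantitative description'' does not by itself bound $\rho_1,\ldots,\rho_{N_0}$ away from the fixed perturbation size $R/2$. A clean way to close the gap within your framework is to replace $U$ by a sufficiently high iterate $U_M$ (which is again a bounded Fatou component of $f$ since $F(f)=F(h)$): for multiply connected wandering domains one has $\min\{|w|:w\in h^{k}(U_M)\}>\max\{|z|:z\in U_M\}$ for \emph{every} $k\ge 1$ once $M$ is large, so $f^{k,\circleddash}(U_M)\cap U_M=\emptyset$ for all $k\ge 1$, and the same estimate disposes of pre-periodicity. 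Alternatively, take $g(z)=cz$ with $c$ chosen small relative to the separation of $U$ from $h(U)$.
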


\begin{proof}
Let $g_1$ be a holomorphic function on the complex plane such that $g_1(2m\pi\iota)=0$ for every $m\in\mathbb{Z}$ and $0$ is an attracting fixed point for $g_1,$ that is $g_1(0)=0$ and $0<|g_1'(0)|<1.$ Take 
$f_1(z)=h_1(z)+\overline{g_1(z)},$
where $h_1(z)=z-1+e^{-z}$.
For every $m\in\mathbb{Z},$ we have $2m\pi\iota$ is an attracting fixed point of $h_1$. Also, for a holomorphic function, attracting periodic points lie in the Fatou set, and the Fatou set is completely invariant 
(see \cite[page 54, Theorem~3.2.4]{Beardon1991}). Fix  $m\in\mathbb{N}$. Then for a given $\epsilon>0,$ there exists $\delta_1,\delta_2>0$ such that we have 
\begin{align*}
|z-2m\pi\iota|<\delta_1& \text{ implies that }|h_1^n(z)-2m\pi\iota|<\frac{\epsilon}{2}, \text{ and }\\
|z-2m\pi\iota|<\delta_2& \text{ implies that }|g_1^n(z)|<\frac{\epsilon}{2},
\end{align*}
for every $n\in\mathbb{N}.$
Take $\delta=\min\{\delta_1, \delta_2\}$. Then, for every $n\in\mathbb{N},$ we have
\begin{align*}
|f_1^{n,\circleddash}(z)-f_1^{n,\circleddash}(2m\pi\iota)|&=|h_1^n(z)+\overline{g_1^n(z)}-2m\pi\iota|\\
&<\frac{\epsilon}{2}+\frac{\epsilon}{2}=\epsilon, 
\end{align*}
whenever $|z-2m\pi\iota|<\delta.$
Now, if we take $f_2(z)=f_1(z)+2\pi\iota$, then using \Cref{rmk2.6}, we get that $F(f_1)=F(f_2).$ As $f_2(2m\pi\iota)=2(m+1)\pi\iota$ 
for every $m\in\mathbb{N},$ we have that the Fatou component containing $0$ is an escaping wandering domain.
\end{proof}
Remark $2.8$, reveals that a polynomial harmonic mapping can never have all of its Fatou components bounded. This behaviour 
is analogous with that of polynomials. In \cite{Baker1980}, Baker posed a question: {\it If $h$ is a transcendental entire function of order at 
most ${1}/{2}$, minimal type, then whether $F(h)$ has any unbounded Fatou component.} We ask the same question for a 
transcendental harmonic mapping. It is to be mentioned that for a transcendental entire function with order strictly less than ${1}/{2}$ 
and minimal type, an affirmative answer has been provided in \cite{Raman2023}. In case of a transcendental harmonic mapping, we try to 
employ the similar techniques. We first recall a few definitions  before proceeding in this direction.
\begin{definition}[\cite{Langley2007}]
Let $h$ be a holomorphic function. The order of the function $h$, denoted by $\rho(h)$ is defined as
\[\rho(h)=\limsup\limits_{r\to\infty}\frac{\log\log M(r,h)}{\log r},\]
where $M(r,h):=\max\{|h(z)|: |z|\leq r\}.$
\end{definition}
\noindent We say a holomorphic function $h$ is of {\it minimal type} if 
\[\limsup\limits_{r\to\infty}\frac{\log M(r,h)}{\log r^{\rho(h)}}=0.\]

It is known that if  $h$ and $g$ are any holomorphic functions with finite order, then $\rho(h +g)\leq \max\{\rho(h), \rho(g)\},$ and 
if  $\rho(h)<\rho(g),$ then $\rho(h +g)=\rho(g).$
Since $|g(z)|=|\overline{g(z)}|$, the order $\rho(f)\leq \max\{\rho(h), \rho(g)\}$, where $f=h+\bar{g}.$  Further, if $g$ is a polynomial, then $
\rho(f)=\rho(h).$ For more details, one can refer to \cite{Langley2007}. The following result gives us a sufficient condition for all the Fatou 
components to be bounded.
 
\begin{theorem}\label{thm2.11}
Let $f=h+\bar{g}$ be a transcendental harmonic mapping such that the following conditions hold
\begin{enumerate}
\item both $F(f)$ and $J(f)$ are forward invariant.
\item $J(f)$ contains at least two points.
\item $|g^n(z)|\leq |h^n(z)|$ for every $n\in\mathbb{N}.$
\item there exist sequences $r_k, \sigma_k\to\infty$ and $s(k)>1$ such that 
\begin{enumerate}
\item $M(r_k, h)= r_{k+1},$
\item $r_k\leq \sigma_k\leq r_{k}^{s(k)}$,
\item $m(\sigma_k, h)>r_{k+1}^{s(k+1)}$ for every sufficiently large $k$. Here, $m(\sigma_k, h):=\min\{|h(z)|:|z|=\sigma_k\}.$
\end{enumerate}
\end{enumerate} 
  
Then the Fatou set of $f$ does not have any unbounded  component.
\end{theorem}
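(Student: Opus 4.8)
The plan is to reduce the harmonic statement to the corresponding fact for the analytic part $h$, using hypothesis (3) as the bridge. The backbone is a \emph{transfer lemma}: condition (3) forces $F(f)\subseteq F(h)$, equivalently $J(h)\subseteq J(f)$. Granting this, the theorem is immediate once we know $h$ has no unbounded Fatou component: any Fatou component $U$ of $f$ satisfies $U\subseteq F(f)\subseteq F(h)$, and being connected it is contained in a single component $V$ of $F(h)$; as $V$ is bounded, so is $U$. Thus the proof splits into (a) the transfer lemma and (b) the absence of unbounded Fatou components for $h$ under the growth conditions (4).

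For (a) I would start from the elementary consequence of (3) that $|f^{n,\circleddash}(z)|=|h^n(z)+\overline{g^n(z)}|\le 2|h^n(z)|$ for every $n$. Suppose $\{f^{n,\circleddash}\}$ is normal on a neighbourhood of a point $\zeta$; I claim $\{h^n\}$ is normal there. Given any subsequence, I pass to a sub-subsequence along which $f^{n,\circleddash}$ converges locally uniformly, either to a harmonic map $\phi$ or to $\infty$. In the latter case $|h^n|\ge\frac{1}{2}|f^{n,\circleddash}|\to\infty$ locally uniformly. In the former, local uniform convergence of the harmonic functions $f^{n,\circleddash}$ forces local uniform convergence of their derivatives, and since $\partial_z f^{n,\circleddash}=(h^n)'$ we obtain $(h^n)'\to\partial_z\phi$ locally uniformly; fixing a base point and refining once more so that the values $h^n(z_\ast)$ converge in $\mathbb{C}\cup\{\infty\}$ recovers local uniform convergence of $h^n$ to a holomorphic map or to $\infty$. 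Hence every subsequence of $\{h^n\}$ has a convergent sub-subsequence, so $\{h^n\}$ is normal at $\zeta$ and $F(f)\subseteq F(h)$.

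For (b) I would first unpack the geometry hidden in (4). Combining (4b) at index $k+1$ with (4c) gives, on the circle $\{|z|=\sigma_k\}$, the bound $|h|\ge m(\sigma_k,h)>r_{k+1}^{s(k+1)}\ge\sigma_{k+1}$, whereas (4a)--(4b) give $\max_{|z|=r_k}|h|=M(r_k,h)=r_{k+1}\le\sigma_{k+1}$ with $r_k<\sigma_k$ for large $k$. So $h$ maps $\{|z|=\sigma_k\}$ strictly outside $\{|w|\le\sigma_{k+1}\}$ while attaining, somewhere in $\{|z|<\sigma_k\}$, a value of modulus at most $r_{k+1}$; the argument principle then shows the image of $\{|z|<\sigma_k\}$ covers $\{|w|<\sigma_{k+1}\}$. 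This \emph{surrounding property}, holding for all large $k$, is precisely the input behind \cite{Raman2023}: iterating it and applying Montel's theorem produces, at arbitrarily large scales, Jordan curves around the origin lying in $J(h)$, whence no Fatou component of $h$ can stretch to infinity.

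The hard part is step (b): turning the asymptotic growth data (4) into genuine separating curves in $J(h)$ is the Baker-type heavy lifting, and the place where the minimum-modulus hypothesis (4c) must be handled carefully and uniformly in $k$. Step (a) is soft by comparison, resting only on the inequality $|f^{n,\circleddash}|\le 2|h^n|$ and the uniqueness of the $h+\bar g$ decomposition under local uniform limits. I note that conditions (1) and (2) are not needed in this streamlined route; they appear designed to support an alternative argument conducted directly on $f$, using forward invariance to propagate the separating curves and the two-point hypothesis to invoke a harmonic analogue of Montel's theorem.
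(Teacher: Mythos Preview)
Your approach is correct and genuinely different from the paper's. The paper does not prove a transfer lemma; instead it runs the Baker-style argument directly on $f$. Assuming an unbounded component $U\subset F(f)$, it uses condition~(1) to know each $f^{n,\circleddash}(U)$ is again a Fatou component and condition~(2) to normalise two omitted Julia points, then builds a path $\Gamma$ in $U$ crossing the circles $|z|=p_k$, $|z|=\sigma_k$, $|z|=r_k^{s(k)}$, iterates $\Gamma$ forward under $f$ using (4a)--(4c) to control the endpoints of $f^{n,\circleddash}(\Gamma)$, and only at the very end invokes~(3) to pass from $|f^{n,\circleddash}|>2$ on $\Gamma$ to $|h^n|>1$, so that Harnack's inequality applied to the positive harmonic function $\log|h^n|$ yields the growth contradiction $\log M(r,h)=O(\log r)$. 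Your route instead factors the problem cleanly: the inequality $|h^n|\ge\frac{1}{2}|f^{n,\circleddash}|$ together with $\partial_z f^{n,\circleddash}=(h^n)'$ gives $F(f)\subseteq F(h)$ using only condition~(3), and then the purely holomorphic statement ``(4) implies $F(h)$ has no unbounded component'' is exactly the classical Baker-type result, recoverable from the paper's own Steps~1--7 by setting $g\equiv 0$ (where (1) and (2) hold automatically). The payoff of your argument is that hypotheses~(1) and~(2) are shown to be superfluous, as you correctly flag; the paper's direct approach, by contrast, is self-contained and illustrates how forward invariance and a two-point Julia set would substitute for complete invariance and Montel if one insisted on arguing entirely at the level of $f$.
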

Prior to the proof of \Cref{thm2.11}, we first make a few comments about its hypotheses as follows.
\begin{itemize}
\item[(i)] In condition $(1)$, the forward invariance of these sets is taken so that we can employ techniques from holomorphic dynamics. 
In addition, it seems a bit difficult to even formulate such a question for a general harmonic mapping.
\item[(ii)] The condition $(3)$ gives us that $h$ is a transcendental entire function.
\item[(iii)] The other conditions are motivated from \cite{Baker1980}.

\end{itemize}  
\noindent\textit{Proof of the \Cref{thm2.11}:}
Assume that $F(f)$ has an unbounded component, say $U$. Also, without loss of generality, assume that $0,1 \in J(f)$, and $f(0)=1.$ 
By $(1)$, each $f^{n,\circleddash}$ omit values $0$ and $1$ in $U$. Now, we follow the following steps to arrive at a 
contradiction.
\smallskip

\noindent {\it Step 1:} From $(c)$, there exists $k_0\in\mathbb{N}$ such that $m(\sigma_k, h)>r_{k+1}^{s(k+1)}$ for every $k\geq k_0.$
\smallskip

\noindent 
{\it Step 2:} Since $U$ is path connected and an unbounded component, there exists  $k_1\in\mathbb{N}$ such that $U$ intersects the 
following three circles for every $k\geq k_1.$
\begin{align*}
T_k&=\{z\in\mathbb{C}: |z|=p_k\},\\
T_k^1&=\{z\in\mathbb{C}: |z|=r_k^{s(k)}\},\\
T_k^2&=\{z\in\mathbb{C}:|z|=\sigma_k\}.
\end{align*}

\noindent  In $T_k$, the sequence $\{p_k\}$ is taken such that $M(p_k,h)=p_{k+1}$ and $p_k^2=r_k$ for every $k\in\mathbb{N}.$%
\smallskip

\noindent 
{\it Step 3:} For a fixed $k\geq \max\{k_0, k_1\},$ we have the existence of a path $\Gamma$ in $U$ joining the points $z_k\in T_k$ and 
$z_{k+1}^1\in T_{k+1}^1.$ This implies that $\Gamma$ will intersect $T_{k+1}^2$ at some point, say $z_{k+1}^2.$
\smallskip

\noindent {\it Step 4:} By $(1)$, the set $f(U)$ is also a Fatou component of $f$. 
It will be an unbounded component as well. For, if $f(U)$ 
is a bounded component, then there exists $M>0$ such that $|f(z)|<M$ for every $z\in U.$ Now, for every $k\geq k_1,$ there exists  
$w_k\in T_k^2 \cap U$ such that $r_k^{s(k)}<|h(w_k)|<|f(w_k)|<M$. Since $r_k\to\infty$ as $k\to\infty$, this gives us a contradiction. 
Therefore, $f(U)$ has to be an unbounded Fatou component.
\smallskip

\noindent {\it Step 5:} The set $f(U)$ contains  the path $f(\Gamma).$ Further, 
$|f(z_k)|\leq 2M(p_k,h)=2p_{k+1}\leq p_{k+1}^2=r_{k+1},$ and from 
$(c),$ we have $|f(z_{k+1}^2)|>r_{k+2}^{s(k+2)}.$ This gives that $f(\Gamma)$ contains an arc joining the points $z_{k+1}$  and 
a point $z_{k+2}^1\in T_{k+2}^1$
such that $|z_{k+1}|\leq 2p_{k+1}$. By induction, we get $f^{n, \circleddash}(U)$ is an unbounded component, and 
contains an arc of $f^{n, \circleddash}(\Gamma)$ joining the points $z_{k+n}$ such that 
$|z_{k+n}|\leq (n+1)p_{k+n}$ and $z_{k+n+1}^1\in 
T_{k+n+1}^1$ for every $n\in\mathbb{N}.$ 
\smallskip

\noindent {\it Step 6:} On the path $\Gamma,$ we have $|f^{n, \circleddash}(z)|\geq (n+1)p_{k+n}$ which tends to infinity as 
$n\to\infty$ . 
Therefore, 
$f^{n,\circleddash}\to\infty$ as $n$ tends to infinity locally uniformly in $U$. This further implies the existence of an $N_0\in\mathbb{N}$ 
such that for every $n> N_0$, we have $|f^{n,\circleddash}(z)|>2$ for every $z\in\Gamma.$ By hypothesis, $|h^n(z)|\geq |g^{n}(z)|$ for 
every $n\in\mathbb{N}$. This gives that for every $n>N_0, |h^n(z)|>1$ for all $z\in\Gamma.$

\smallskip

\noindent 
{\it Step 7:} Since $\log |h^n|$ is a positive harmonic mapping on $\Gamma,$ for every $n>N_0$, there exists a $\alpha>0$ 
such that for 
every $u, v\in\Gamma,$ we have
\[|h^n(u)|\leq |h^n(v)|^\alpha.\]
Now for each $n>N_0$, choose $u_n, u_n^1\in\Gamma$ such that $f^{n,\circleddash}(u_n)=z_{k+n}$ and 
$f^{n,\circleddash}(u_n^1)=z_{k+n+1}^1.$ Therefore, for every $n>N_0,$
\[ |h^n(u_n^1)|\leq|f^{n,\circleddash}(u_n^1)|\leq 2 |h^n(u_n^1)|\leq 2 |h^n(u_n)|^\alpha\leq 2 |f^{n,\circleddash}(u_n)|^\alpha.\]
Further, 
\begin{align*}
2(n+1)r_{k+n}^{\alpha/2}>2(n+1)p_{k+n}^\alpha &\geq r_{k+n+1}^{s(k+n+1)}> r_{k+n+1}=M(r_{k+n},h),
\end{align*}
for every $n>N_0.$
This gives 
\[\frac{\log M(r_{k+n}, h)}{\log r_{k+n}}<\infty,\]
for every $n>N_0,$ which is a contradiction as $h$ is a transcendental entire function. 
Therefore, $F(f)$ does not have any unbounded component.\qed

\begin{remark}
Using the condition $(3)$, one can deduce that $\rho(g)\leq \rho(h)$. This, in particular, gives us that $\rho(f)\leq \rho(h).$
\end{remark}

\begin{corollary}
If $f=h+\bar{g}$ is a transcendental harmonic mapping such that the order of $h$ is strictly less than ${1}/{2},$ minimal type and the conditions $(1)-(3)$ hold, then $F(f)$ does not have any unbounded component.
\end{corollary}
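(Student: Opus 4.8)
The plan is to deduce conditions $(4a)$--$(4c)$ of \Cref{thm2.11} from the single hypothesis that $h$ is a transcendental entire function of order $\rho:=\rho(h)<1/2$ and of minimal type. Conditions $(1)$--$(3)$ are assumed outright, and since condition $(4)$ is a purely one-variable statement about $h$ — it involves only $M(r,h)$ and $m(\sigma_k,h)$ — verifying $(4)$ reduces the corollary to a direct application of \Cref{thm2.11}.

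First I would fix the sequence $\{r_k\}$ required by $(4a)$ through the recursion $r_{k+1}=M(r_k,h)$ for a sufficiently large $r_1$; as $h$ is transcendental, $M(r,h)/r^N\to\infty$ for every $N$, so $r_k\to\infty$ and $\log r_{k+1}=\log M(r_k,h)$, which is exactly $(4a)$. The driving tool for $(4c)$ is the minimum-modulus ($\cos\pi\rho$) theorem of Wiman: for an entire function of order $\rho<1/2$,
\[\liminf_{r\to\infty}\frac{\log m(r,h)}{\log M(r,h)}\ge\cos\pi\rho>0.\]
Writing $c:=\cos\pi\rho\in(0,1]$, this gives $\log m(r,h)\ge(c-\varepsilon)\log M(r,h)$ for all large $r$, so that for $\sigma_k$ taken inside the window $[r_k,r_k^{s(k)}]$ of $(4b)$ the requirement $m(\sigma_k,h)>r_{k+1}^{s(k+1)}=M(r_k,h)^{s(k+1)}$ of $(4c)$ reduces to arranging $(c-\varepsilon)\log M(\sigma_k,h)>s(k+1)\log M(r_k,h)$.

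The main obstacle is to choose $\sigma_k$ and the exponents $s(k)>1$ simultaneously so that this inequality persists along the extremely rapidly growing sequence $\{r_k\}$, the delicate point being that the estimate at stage $k$ already involves the not-yet-chosen exponent $s(k+1)$. This is precisely where the order and minimal-type hypotheses are used: writing $\phi(t)=\log M(e^{t},h)$, which is increasing and convex, the order condition controls $\phi$ from above, while minimal type keeps $\phi(t)$ strictly below the rate $e^{\rho t}$, and together these quantitative growth estimates leave enough room to place $\sigma_k$ well inside $[r_k,r_k^{s(k)}]$ with $\log m(\sigma_k,h)$ still exceeding the threshold $s(k+1)\log r_{k+1}$. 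This construction of $r_k$, $\sigma_k$ and $s(k)$ for transcendental entire functions of order strictly less than $1/2$ and minimal type is carried out in \cite{Raman2023}; applying it to $h$ verifies $(4a)$--$(4c)$, whence \Cref{thm2.11} yields that $F(f)$ has no unbounded component.
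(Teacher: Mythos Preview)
Your proposal is correct and takes the same route as the paper: invoke \cite{Raman2023} to verify condition~$(4)$ for $h$, and then apply \Cref{thm2.11}; the paper's own proof is exactly this two-line deduction without the intermediate heuristics. One small caveat in your exposition: the classical $\cos\pi\rho$ theorem yields only a $\limsup$ (not a $\liminf$) of $\log m(r,h)/\log M(r,h)$, so the careful construction in \cite{Raman2023} is genuinely required and not merely a convenient reference.
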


\begin{proof}
From \cite{Raman2023}, we get that the condition $(4)$ is satisfied for $h$. Therefore, the result follows by the \Cref{thm2.11}.
\end{proof}

\section*{Acknowledgements}\vspace{-1mm}
The research of the second author was supported by the Institute Post-doctoral Fellowship
program at Indian Institute of Science Education and Research (IISER) Mohali. 
The authors are also thankful to Prof. Sanjay Kumar for his suggestions.

\section*{Declarations}

\textbf{Conflict of interest:} The authors declare that they do not have any conflict of interest.
\smallskip

\textbf{Data Availability:} Data availability is not relevant.

\end{document}